\newtheorem{thm}{Theorem}[section]
\newtheorem{cor}[thm]{Corollary}
\newtheorem{lem}[thm]{Lemma}
\newtheorem{prp}[thm]{Proposition}
\def\1{\bf{1}}
\def\0{\bf{0}}
\def\SG{{\mathcal{G}}}
\def\F{{\mathcal{F}}}
\begin{document}

\title{\bf On signed graphs with at most two eigenvalues unequal to $\pm 1$}

\author{
Willem H. Haemers\thanks{\small{\tt{haemers@uvt.nl}}}
\\
{\small Dept. of Econometrics and O.R., Tilburg University, The Netherlands}
\\[5pt]
Hatice Topcu\thanks{\small{\tt{haticekamittopcu@gmail.com}}}
\\
{\small Dept. of Mathematics, Nev\c{s}ehir Hac{\i} Bekta\c{s} Veli University, Turkey}\\
}
\date{}

\maketitle

\begin{abstract}
\noindent
We present the first steps towards the determination of the signed graphs for which the adjacency matrix has all but at most two
eigenvalues equal to $\pm 1$.
Here we deal with the disconnected, the bipartite and the complete signed graphs.
In addition, we present many examples which cannot be obtained from an unsigned graph or its negative by switching.
\\[5pt]
{Keywords:}~signed graph, graph spectrum, spectral characterization.
AMS subject classification:~05C50.
\end{abstract}

\section{Introduction}

A {\em signed graph} $G^\sigma$ is a graph $G=(V, E)$ together with
a function $\sigma : E \rightarrow \{-1, +1\}$, called the \textit{signature function}.
So, every edge is either positive or negative.
The graph $G$ is called the {\em underlying graph} of $G^\sigma$.
The adjacency matrix $A$ of $G^\sigma$ is obtained from the adjacency matrix of $G$,
by replacing $1$ by $-1$ whenever the corresponding edge is negative.
The signed graph $G^{-\sigma}$ with adjacency matrix $-A$ is called the {\em negative} of $G^\sigma$.
The spectrum of $A$ is also called the spectrum of the signed graph $G^\sigma$.
For a vertex set $X\subset V$, the operation that changes the sign of all edges between $X$ and $V\setminus X$ is called switching.
In terms of the matrix $A$, switching multiplies the rows and columns of $A$ corresponding to $X$ by $-1$.
If a signed graphs can be switched into an isomorphic copy of another signed graph, the two signed graphs are called
{\em switching isomorphic}.
Switching isomorphic signed graphs have similar adjacency matrices and therefore they 
have the same spectrum.

Here we consider signed graphs for which the spectrum has all but at most two eigenvalues equal to $1$ or $-1$.
We define $\SG$ to be the set of signed graphs with this property.
Then $\SG$ is closed under switching, negation, and adding or deleting isolated edges.
The unsigned graphs in $\SG$ have been determined in \cite{CHVW},
and every signed graph switching isomorphic to an unsigned graph in $\SG$ is in $\SG$, and so is its negative.
However, there are many other signed graphs in $\SG$ as we shall see, and the determination of all graphs
in $\SG$ will be more complicated than for the unsigned case.
It is already nontrivial to determine the signed complete graphs in $\SG$ (see Section~\ref{complete}).
Nevertheless we believe that the determination is possible.
Here we deal with the graphs in $\SG$ for which the underlying graph is disconnected, bipartite or complete.
In addition we give many other examples of signed graphs in $\SG$.
We hope to obtain the complete description of signed graphs in $\SG$ in future publications.

We use eigenvalue interlacing and other techniques from linear algebra for which we refer to~\cite{BH}.
Some background on signed graphs can be found in~\cite{BCKW}.
As usual, $J$ is the all-ones matrix and $O$ the all-zeros matrix.
The all-ones and all-zeros vector are denoted by $\1$ and $\0$ respectively.

\section{Disconnected graphs in $\SG$}

We start with a lemma that generalizes a well-known result for unsigned graphs.

\begin{lem}\label{-1}
If a signed graph $G^\sigma$ has smallest eigenvalue at least $-1$, then the underlying graph $G$ is a disjoint union of complete graphs.
\end{lem}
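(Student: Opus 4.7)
The plan is to mimic the classical argument for unsigned graphs, using eigenvalue interlacing on an induced $P_3$, and to observe that the signs of the edges do not interfere with the key computation.

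First I would assume for contradiction that the underlying graph $G$ is not a disjoint union of complete graphs. A standard graph-theoretic fact says that a graph is a disjoint union of cliques if and only if it contains no induced path $P_3$ on three vertices. Hence, under this assumption there exist vertices $u,v,w \in V$ with $uv, vw \in E$ and $uw \notin E$.

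Next I would examine the principal submatrix $B$ of $A$ indexed by $\{u,v,w\}$. Since $uw$ is a non-edge and $uv$, $vw$ are edges (with signs $\varepsilon_1,\varepsilon_2 \in \{\pm 1\}$), the matrix $B$ has the shape
\[
B = \begin{pmatrix} 0 & \varepsilon_1 & 0 \\ \varepsilon_1 & 0 & \varepsilon_2 \\ 0 & \varepsilon_2 & 0 \end{pmatrix}.
\]
A direct computation shows $\det(xI - B) = x(x^2 - 2)$, so the eigenvalues of $B$ are $0, \sqrt{2}, -\sqrt{2}$, independently of the signs $\varepsilon_1,\varepsilon_2$. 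This is the crucial point where the signed case reduces to the unsigned one: the off-diagonal entries appear only through their squares in the characteristic polynomial of a tridiagonal symmetric $3\times 3$ matrix.

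Finally, by Cauchy's interlacing theorem (see \cite{BH}), the smallest eigenvalue of $A$ is at most the smallest eigenvalue of $B$, namely $-\sqrt{2} < -1$, contradicting the hypothesis. Hence $G$ has no induced $P_3$ and is a disjoint union of complete graphs. I do not expect any real obstacle here; the only nontrivial ingredient is recognizing that the sign pattern on a $P_3$ is irrelevant to the spectrum of the induced $3\times 3$ submatrix, which makes interlacing go through unchanged.
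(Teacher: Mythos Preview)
Your proof is correct and follows essentially the same route as the paper: both show that any signed $P_3$ has spectrum $\{-\sqrt{2},0,\sqrt{2}\}$ regardless of the signs, and then use interlacing to forbid an induced $P_3$ when the smallest eigenvalue is at least $-1$. Your write-up is simply a more explicit version of the same argument.
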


\begin{proof}
For any signature function $\sigma$ the signed path $P_3^\sigma$ has spectrum $\{-\sqrt{2},0,\sqrt{2}\}$.
If $G^\sigma$ has smallest eigenvalue at least $-1$, then by interlacing $P_3^\sigma$ is not an induced signed subgraph of $G^\sigma$,
and therefore every component of $G$ is a complete graph.
\end{proof}

Using this lemma the following results are easily proved.

\begin{prp}
A signed graph $G^\sigma$ has all eigenvalues equal to $\pm 1$ if and only if the underlying graph $G$ is a disjoint union of edges.
\end{prp}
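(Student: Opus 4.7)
The plan is to prove both implications, using Lemma~\ref{-1} for the nontrivial direction and a trace-of-$A^2$ computation to finish.

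For the easy direction, suppose $G$ is a disjoint union of edges. Each connected component is a signed $P_2$, whose adjacency matrix $\bigl(\begin{smallmatrix}0 & \pm 1\\ \pm 1 & 0\end{smallmatrix}\bigr)$ has spectrum $\{+1,-1\}$. The spectrum of $G^\sigma$ is the multiset union of the spectra of its components, so all eigenvalues are $\pm 1$.

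For the converse, assume every eigenvalue of $G^\sigma$ is $\pm 1$. In particular the smallest eigenvalue is at least $-1$, so by Lemma~\ref{-1} the underlying graph $G$ is a disjoint union of complete graphs, say with component orders $n_1,\dots,n_k$. Isolated vertices would contribute the eigenvalue $0$, which does not occur, so $n_i\ge 2$ for every $i$. I would then compare traces: on one hand $\operatorname{tr}(A^2)=\sum_i \lambda_i^2 = n$, where $n=\sum_i n_i$ is the total number of vertices; on the other hand $\operatorname{tr}(A^2)$ equals the sum of squared entries of $A$, which is $2|E|=\sum_i n_i(n_i-1)$. Equating gives
\[
\sum_{i=1}^{k} n_i(n_i-2)=0.
\]
Since each $n_i\ge 2$, every summand is nonnegative, forcing $n_i=2$ for all $i$. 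Hence $G$ is a disjoint union of edges.

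There is no real obstacle here: once Lemma~\ref{-1} is in hand, the only step that requires any thought is excluding complete components of order $\ge 3$, and the trace identity dispatches this uniformly without having to analyse the possible signatures of $K_n$. An alternative route would be to argue via interlacing that a signed $K_n$ with $n\ge 3$ cannot have spectrum contained in $\{\pm 1\}$, but the trace argument above is cleaner and avoids a case analysis over signatures.
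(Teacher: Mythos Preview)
Your proof is correct and follows essentially the same approach as the paper: apply Lemma~\ref{-1} to reduce to complete components, then compare $\operatorname{tr}(A^2)$ computed spectrally and combinatorially to force each component to have order~$2$. The only cosmetic difference is that the paper runs the trace identity on a single component (getting $m(m-1)=m$, hence $m=2$), whereas you sum over all components and use nonnegativity of $n_i(n_i-2)$; both are the same argument.
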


\begin{proof}
The disjoint union of edges clearly has all eigenvalues equal to $\pm 1$ (for any signing).
Conversely, suppose $G^\sigma$ has all eigenvalues equal to $\pm 1$.
Then, by Lemma~\ref{-1}, each component of $G^\sigma$ is a signed complete graph.
If $A$ is the adjacency matrix of such a component of order $m$ (say), then trace$(A^2)=m(m-1)$.
On the other hand, all eigenvalues of $A$ are $\pm 1$, so the sum of the eigenvalues of $A^2$ equals $m$.
This implies that $m=2$.
\end{proof}

\begin{prp}\label{but1}
If $G^\sigma$ is connected and all but one eigenvalues
are equal to $\pm 1$,
then $G^\sigma$ or $G^{-\sigma}$ is switching isomorphic
with the unsigned complete graph $K_n$ with $n\neq 2$.
\end{prp}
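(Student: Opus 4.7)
The plan is to identify the underlying graph as $K_n$ via Lemma~\ref{-1}, and then to deduce from a rank-one factorization of $A+I$ that the signature is a switch of the unsigned one.

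Write the spectrum of $G^\sigma$ as $a$ copies of $+1$, $b$ copies of $-1$, and one exceptional eigenvalue $\mu$, with $a+b=n-1$. Since $\operatorname{trace}(A)=0$, I get $\mu=b-a\in\mathbb{Z}$. If $\mu\in\{\pm 1\}$ then all eigenvalues lie in $\{\pm 1\}$, and by the preceding proposition together with connectedness $G=K_2$, which the statement excludes; the edge case $\mu=0$ will force $n=1$ (trivial). So I may assume $|\mu|\geq 2$, and after replacing $G^\sigma$ by $G^{-\sigma}$ if necessary, $\mu\geq 2$. Then $b\geq 1$, the smallest eigenvalue of $G^\sigma$ equals $-1$, and Lemma~\ref{-1} combined with connectedness gives $G=K_n$.

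Comparing $\operatorname{trace}(A^2)=n(n-1)$ (one for each off-diagonal entry) with the eigenvalue sum $\mu^2+(n-1)$ yields $\mu=n-1$. Hence $A+I$ has spectrum $\{n,0,\ldots,0\}$; being symmetric of rank one, it factors as $A+I=\mathbf{u}\mathbf{u}^T$ for some $\mathbf{u}\in\mathbb{R}^n$. The zero diagonal of $A$ gives $u_i^2=1$, so $u_i=\pm 1$; then for $i\neq j$ the identity $A_{ij}=u_iu_j$ exhibits $A$ as the adjacency matrix of the unsigned $K_n$ switched on the vertex set $X=\{i:u_i=-1\}$, so $G^\sigma$ is switching isomorphic to the unsigned $K_n$.

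The step I expect to be the main obstacle is this last one: passing from ``same spectrum as $K_n$'' to an honest switching equivalence. The rank-one argument handles it cleanly, but only because the loopless diagonal pins the eigenvector coordinates to $\pm 1$; without that, the factorization would yield real $u_i$ and no switching would be in sight. The reduction to $\mu\geq 2$ via negation and the invocation of Lemma~\ref{-1} are routine once the trace computation fixes $\mu$.
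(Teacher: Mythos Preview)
Your proof is correct and takes a genuinely different route from the paper's. Both arguments invoke Lemma~\ref{-1} to force $G=K_n$. From there the paper proceeds combinatorially: switch so that one vertex $v$ has all positive edges; if the remaining signed $K_{n-1}$ is neither all-positive nor all-negative, four vertices can be found inducing a signed $K_4$ with exactly one edge of the minority sign, whose spectrum $\{\pm 1,\pm\sqrt{5}\}$ forces (by interlacing) two eigenvalues outside $\{\pm 1\}$. You instead pin down the spectrum exactly via $\operatorname{trace}(A^2)$, obtaining $\mu=n-1$ and hence $a=0$, so that $A+I$ is positive semidefinite of rank one; the factorization $A+I=\mathbf{u}\mathbf{u}^\top$ together with the unit diagonal then exhibits the switching explicitly. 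Your approach is slicker here and avoids the case analysis on four-vertex subgraphs, while the paper's interlacing\,/\,forbidden-subgraph method is the technique that scales to the two-exceptional-eigenvalue situations treated later (Theorem~\ref{Seidel}). One small remark: your dismissal of $\mu=0$ as ``will force $n=1$'' is true but not justified at the point you state it; it actually needs the same Lemma~\ref{-1} plus $\operatorname{trace}(A^2)$ computation you spell out afterwards, so it would read more cleanly to absorb that case into the main argument.
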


\begin{proof}
Obviously $G^\sigma$ or $G^{-\sigma}$ has smallest eigenvalue at least $-1$.
Therefore, by Lemma~\ref{-1}, the underlying graph $G$ is the complete graph $K_n$.
Choose a vertex $v$ and switch such that every edge containing $v$ is positive.
Consider the signed graph $G^{\sigma}_v$ on the remaining vertices.
If $G^{\sigma}_v$ has only positive, or only negative edges, then $G^\sigma$ is switching isomorphic with $K_n$.
So we can assume that $n\geq 4$ and that $G^\sigma_v$ has a vertex $w$ incident with a positive edge $\{w,x\}$ and a negative edge $\{w,y\}$.
If edge $\{x,y\}$ is positive, then $v$, $w$, $x$ and $y$ induce a signed complete graph with one negative edge,
and if edge $\{x,y\}$ is negative, we switch with respect to $v$,
and then $v$, $w$, $x$ and $y$ induce a signed complete graph with one positive edge.
In both cases the subgraph has spectrum $\{\pm 1,\pm\sqrt{5}\}$.
Therefore, by eigenvalue interlacing, $G^\sigma$ has at least two eigenvalues unequal to $\pm 1$. 
\end{proof}

\begin{thm}
If $G^\sigma\in\SG$ is disconnected and $G$ has no isolated edges, then $G^\sigma$ is the disjoint union of two
signed graphs both switching isomorphic with an unsigned complete graph or its negative.
\end{thm}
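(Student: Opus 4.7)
The plan is to use the fact that the spectrum of a disconnected signed graph is the multiset union of the spectra of its components, and then apply the two preceding results componentwise.

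Write $G^\sigma = H_1 \sqcup \cdots \sqcup H_k$ as a disjoint union of its connected components, where $k\geq 2$. Let $n_i$ denote the number of eigenvalues of $H_i$ that lie outside $\{+1,-1\}$. Since the spectra add with multiplicity, membership of $G^\sigma$ in $\SG$ gives the single inequality
\[
\sum_{i=1}^{k} n_i \;\leq\; 2.
\]

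The key step is to show that $n_i \geq 1$ for every $i$. If instead some $n_i = 0$, then every eigenvalue of $H_i$ equals $\pm 1$; by the proposition immediately preceding Proposition~\ref{but1}, the underlying graph of $H_i$ is then a disjoint union of edges, and since $H_i$ is a connected component it must itself be a single edge. This contradicts the hypothesis that $G$ has no isolated edges. Combining $n_i \geq 1$ with the inequality above and with $k\geq 2$ forces $k=2$ and $n_1 = n_2 = 1$.

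It remains to identify each component. Since each $H_i$ is connected and has exactly one eigenvalue outside $\{+1,-1\}$, Proposition~\ref{but1} applies and tells us that $H_i$ or $H_i^{-\sigma}$ is switching isomorphic to an unsigned complete graph $K_{m_i}$ with $m_i \neq 2$, which is precisely the conclusion of the theorem.

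The argument is short and presents no real obstacle given Lemma~\ref{-1} and Proposition~\ref{but1}; the only delicate point is invoking the no-isolated-edge hypothesis exactly once, to exclude components whose spectrum is entirely contained in $\{\pm 1\}$.
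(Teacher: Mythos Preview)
Your proof is correct and follows essentially the same route as the paper's: both argue that each component contributes at least one eigenvalue outside $\{\pm 1\}$ (ruled out otherwise by the characterization of signed graphs with all eigenvalues $\pm 1$, which would force an isolated edge), conclude there are exactly two components each with precisely one such eigenvalue, and finish by invoking Proposition~\ref{but1}. You have simply written out the counting in more detail.
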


\begin{proof}
Since $G^\sigma$ is disconnected, $G^\sigma$ has at least two components,
and since $G$ has no isolated edges, each component has an eigenvalue unequal to $\pm 1$.
Therefore $G^\sigma$ has exactly two components, and the result follows from Proposition~\ref{but1}.
\end{proof}

\section{Signed complete graphs in $\SG$}\label{complete}

The adjacency matrix of a signed complete graph is the Seidel matrix of an unsigned graph ($-1$ corresponds to adjacency).
If $S$ and $S'$ are the adjacency matrices of two switching isomorphic signed complete graphs, then the (unsigned)
graphs with Seidel matrices $S$ and $S'$ are called {\em switching equivalent}.

Here we determine all signed complete graphs in $\SG$, or equivalently all Seidel matrices with all but at most two eigenvalues equal
to $\pm 1$.
To achieve this we define $\F$ to be the class of (unsigned) graphs where each member $G$ is a clique extended with some isolated vertices
($G=K_m+\ell K_1$, with $m\geq 1,\ell\geq 0$), or the complement (a {\em complete split graph}).
Note that $\F$ contains all graphs of order at most 3.
Figure~\ref{forb} shows all graphs of order $4$ which are not in $\F$.

\setlength{\unitlength}{2.7pt}
\begin{figure}[h]
\begin{picture}(160,20)(-38,-5)

\put(-30,0){\line(1,0){12}}
\put(-30,12){\line(1,0){12}}
\put(-30,0){\circle*{2}}
\put(-30,12){\circle*{2}}
\put(-18,12){\circle*{2}}
\put(-18,0){\circle*{2}}

\put(-25,-4){$F_0$}

\put(0,0){\line(1,0){12}}
\put(0,0){\line(0,1){12}}
\put(0,0){\circle*{2}}
\put(0,12){\circle*{2}}
\put(12,12){\circle*{2}}
\put(12,0){\circle*{2}}

\put(5,-4){$F_1$}

\put(30,0){\line(1,0){12}}
\put(30,0){\line(0,1){12}}
\put(42,0){\line(0,1){12}}
\put(30,0){\circle*{2}}
\put(30,12){\circle*{2}}
\put(42,12){\circle*{2}}
\put(42,0){\circle*{2}}

\put(35,-4){$F_2$}

\put(60,0){\line(1,1){12}}
\put(60,0){\line(1,0){12}}
\put(60,0){\line(0,1){12}}
\put(60,12){\line(1,0){12}}
\put(60,0){\circle*{2}}
\put(60,12){\circle*{2}}
\put(72,12){\circle*{2}}
\put(72,0){\circle*{2}}

\put(65,-4){$F_3$}

\put(90,0){\line(1,0){12}}
\put(90,0){\line(0,1){12}}
\put(102,0){\line(0,1){12}}
\put(90,12){\line(1,0){12}}
\put(90,0){\circle*{2}}
\put(90,12){\circle*{2}}
\put(102,12){\circle*{2}}
\put(102,0){\circle*{2}}

\put(95,-4){$F_4$}

\end{picture}
\caption{Forbidden induced subgraphs for $\F$}\label{forb}
\end{figure}
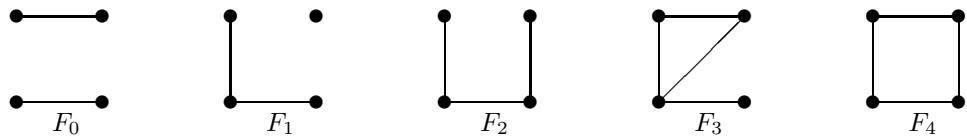

\begin{lem}\label{4}
A graph $G$ is in $\F$ if and only if no induced subgraph of $G$ of order $4$ is isomorphic to a graph in
Figure~\ref{forb}.
\end{lem}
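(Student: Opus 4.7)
The plan is to handle both implications; the forward direction is a routine inspection. If $G=K_m+\ell K_1$, then every induced $4$-vertex subgraph of $G$ has the form $K_j+iK_1$ with $j+i=4$, and one checks that none of the five graphs of this form is isomorphic to any of $F_0,\ldots,F_4$. Since the forbidden set is closed under complementation (a direct check gives $\overline{F_0}=F_4$, $\overline{F_1}=F_3$, $\overline{F_2}=F_2$), and $\F$ itself is closed under complementation, the forward direction also handles the case $G=\overline{K_m+\ell K_1}$.

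For the backward direction, assume $G$ has no induced $4$-vertex subgraph from Figure~\ref{forb}. Because both hypothesis and conclusion are invariant under complementation, I may pass to $\overline{G}$ whenever convenient. The plan is first to reduce to the case that $G$ is disconnected, and this reduction relies on the classical fact that a $P_4$-free graph on at least two vertices is either disconnected or has a disconnected complement. The hypothesis supplies $P_4$-freeness (since $F_2=P_4$), so this fact applies; it can be justified by a short induction on $|V(G)|$, showing that whenever $G-v=A\cup B$ is a disjoint union but $v$ is adjacent to a proper nonempty subset of $A$, an induced $P_4$ must appear.

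After complementing if necessary, $G$ is disconnected. The $F_0=2K_2$-freeness then forces at most one component of $G$ to contain an edge, so I can write $G=G_1+\ell K_1$ with $G_1$ connected and $\ell\geq 1$. It remains to show that $G_1$ is a clique. If not, then $G_1$ has two non-adjacent vertices and, since $G_1$ is connected, a shortest path between them yields an induced $P_3$; adjoining one of the $\ell\geq 1$ isolated vertices then produces an induced $F_1=P_3+K_1$, contradicting the hypothesis. Hence $G_1=K_m$ and $G=K_m+\ell K_1\in\F$; the small-order cases $|V(G)|\leq 3$ are covered by the observation already made in the text.

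The main obstacle I foresee is the cograph decomposition step, that is, the passage from $P_4$-freeness to the disconnection of $G$ or $\overline{G}$. Once that step is in place, the extraction of structure from $F_0$- and $F_1$-freeness is short, and the forbidden graphs $F_3$ and $F_4$ enter only indirectly, through the complementation symmetry that justifies the reduction to the disconnected case.
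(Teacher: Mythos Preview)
Your argument is correct, but it follows a genuinely different route from the paper. The paper proceeds by direct induction on $|V(G)|$: delete a vertex $v$, apply the induction hypothesis to obtain (after complementing if necessary) $G_v=K_m+(n-1-m)K_1$, and then run a case analysis on the adjacencies of $v$, producing one of $F_0,\ldots,F_4$ whenever $v$ fails to attach in the required way. You instead factor the problem through the cograph structure theorem: $F_2$-freeness gives $P_4$-freeness, hence $G$ or $\overline{G}$ is disconnected; then $F_0$-freeness forces at most one nontrivial component, and $F_1$-freeness (using an isolated vertex) forces that component to be a clique. Your approach makes the role of each forbidden graph very transparent ($F_2$ drives the decomposition, $F_0$ and $F_1$ pin down the structure, $F_3=\overline{F_1}$ and $F_4=\overline{F_0}$ enter only through the complementation symmetry), at the price of invoking or reproving Seinsche's theorem; the paper's induction is entirely self-contained and avoids that external ingredient. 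Both are short, and your sketch of the $P_4$-free decomposition step is accurate.
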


\begin{proof}
It is clear that if $G\in\F$, then $G$ does not contain $F_0$ to $F_4$.
We prove the converse by induction on the number of vertices $n$.
For $n\leq 4$ this is trivial.
Let $G$ be a graph of order $n\geq 5$ that does not contain $F_0$ to $F_4$.
Let $G_v$ be an induced subgraph of $G$ of order $n-1$ obtained by deleting a vertex $v$.
The induction hypothesis gives $G_v\in\F$.
We assume that $G_v=K_m+(n-1-m)K_1$ with $1\leq m \leq n-2$, otherwise we replace $G$ and $G_v$ by their complements
(the set $\{F_0,\ldots,F_4\}$ is closed under taking complements).

First consider the case $m=1$, then $G_v$ has no edges.
Suppose $v$ is adjacent to at least two vertices of $G_v$, and nonadjacent to at least one vertex of $G_v$.
Then $G_v$ contains $F_1$, contradiction.
Therefore $v$ is adjacent to all, one, or no vertices of $G_v$.
But then $G\in\F$.

Next consider the case $2\leq m \leq n-2$.
If $v$ is adjacent to an isolated vertex of $G_v$,
then $G$ contains $F_0$, $F_2$ or $F_3$.
Therefore $v$ is nonadjacent to each isolated vertex of $G_v$.
If $v$ is adjacent to at least one vertex of $K_m$ and also nonadjacent
to at least one vertex of $K_m$, then $G_v$ contains $F_1$.
Therefore $v$ is adjacent to all or no vertices of $K_m$.
Thus we have $G\in\F$.
\end{proof}

The Seidel matrix $S_{m,\ell}$ of $K_m+\ell K_1$ has the form
\[
S_{m,\ell}=
\left[
\begin{array}{cc}
I_m-J & J \\
J & J-I_\ell
\end{array}
\right], \mbox{ and spectrum }
\]
\begin{eqnarray}\label{comspec}
\{-1^{m-1}, 1^{\ell-1}, \textstyle{\frac{1}{2}}(m-\ell \pm\sqrt{m^2+\ell^2+6m\ell-4m-4\ell +4})\}
\end{eqnarray}
(eigenvalue multiplicities are denoted as exponents).
Therefore, if $G \in \F$ then $G$ or its complement has a Seidel matrix with the above spectrum.
Hence all but at most two Seidel eigenvalues of $G$ are equal to $\pm 1$.
The next theorem shows that also the converse is true.
For future references we use a slightly stronger statement.

\begin{thm}\label{Seidel}
Assume $G$ is a graph for which the Seidel matrix has all but at most two eigenvalues in the interval $[-1,1]$.
Then $G$ is switching equivalent to a graph in $\F$.
\end{thm}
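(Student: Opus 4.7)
My plan is to argue by induction on $n=|V(G)|$, using Lemma~\ref{4} (which characterises $\F$ by the five forbidden 4-vertex induced subgraphs $F_0,\dots,F_4$) together with eigenvalue interlacing and a rank-$1$ bordering argument. For the base case $n\le 4$ the statement reduces to direct inspection: every $4$-vertex graph lies in one of only three Seidel switching classes (with Seidel spectra $\{-3,1,1,1\}$, $\{3,-1,-1,-1\}$, and $\{\sqrt{5},1,-1,-\sqrt{5}\}$), and each such class contains at least one representative in $\F$.

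For the inductive step, let $G$ have $n\ge 5$ vertices and Seidel matrix $S$ with at most two eigenvalues outside $[-1,1]$. I would fix a vertex $v_0$ and switch $G$ so that $v_0$ becomes isolated in the resulting graph $G^*$; the Seidel matrix of $G^*$ then has the block form
\[
S(G^*)=\begin{pmatrix} 0 & \mathbf{1}^\top \\ \mathbf{1} & S_0 \end{pmatrix},
\]
where $S_0$ is the Seidel matrix of $G^*\setminus\{v_0\}$. A bordering (rank-$1$ perturbation) analysis relates the eigenvalues of $S(G^*)$ to those of $S_0$ and to the inner product $\mathbf{1}^\top(S_0-\lambda I)^{-1}\mathbf{1}$, and I would use this to transfer the spectral hypothesis, obtaining that $S_0$ itself has at most two eigenvalues outside $[-1,1]$. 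The induction hypothesis then gives $G^*\setminus\{v_0\}$ switching equivalent to some $K_m+\ell K_1$ or its complement; adjoining the isolated $v_0$ places $G^*$, and hence $G$, in the Seidel switching class of $\F$ (with a short case check to rule out the degenerate configuration where the inductive conclusion gives a complete split graph that cannot admit an isolated vertex).

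The main obstacle I foresee is exactly this spectral transfer in the inductive step: Cauchy interlacing alone is too weak, since in the worst case it only guarantees up to \emph{four} eigenvalues of $S_0$ outside $[-1,1]$, and the bordering structure must therefore be exploited with care (possibly by choosing $v_0$ so that the associated standard basis vector overlaps heavily with the two outlying eigenvectors of $S$). A secondary concern is that the middle eigenvalues of $S$ are only assumed to lie in $[-1,1]$, not necessarily at $\pm 1$; since Seidel matrices are integer and so have algebraic integer eigenvalues, a Kronecker-style argument suggests the middle values are actually in $\{-1,0,1\}$, but the $0$-eigenvalue case may need separate attention. As a fallback, the small cases $n=5,6$ can be verified by direct enumeration of Seidel switching classes against the spectra given by~(\ref{comspec}), and the induction can be started from $n=7$.
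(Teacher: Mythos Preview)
Your worry about interlacing is misplaced: if $\mu_1\ge\cdots\ge\mu_{n-1}$ are the eigenvalues of $S_0$ and $\lambda_1\ge\cdots\ge\lambda_n$ those of $S(G^*)$, then $\mu_i>1$ forces $\lambda_i\ge\mu_i>1$ and $\mu_j<-1$ forces $\lambda_{j+1}\le\mu_j<-1$, so $S_0$ automatically has at most two eigenvalues outside $[-1,1]$ by Cauchy interlacing; no bordering analysis is needed. The genuine gap is in the gluing step. Your inductive hypothesis only yields that $G_v:=G^*\setminus\{v_0\}$ is \emph{switching equivalent} to some $H\in\F$, via some switching set $X\subset V(G_v)$. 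Applying that same switching to all of $G^*$ destroys the isolation of $v_0$: afterwards $v_0$ is adjacent precisely to $X$. You are then left with $H$ together with a vertex whose neighbourhood in $H$ is arbitrary, and deciding whether such a graph lies in the switching class of $\F$ is not a ``short case check'' --- it is essentially the original problem for this restricted family and still needs the spectral hypothesis in a nontrivial way.

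The paper avoids induction altogether. With $v_0$ switched to be isolated, it proves $G_v\in\F$ outright (not merely up to switching) via Lemma~\ref{4}: any $4$-vertex induced subgraph $F$ of $G_v$ gives the $5$-vertex induced subgraph $F+K_1$ of $G^*$ (the $K_1$ being $v_0$), whose Seidel matrix must by interlacing have at least three eigenvalues in $[-1,1]$; a direct computation of the Seidel spectra of $F_i+K_1$ for $i=0,\dots,4$ shows each has at most two such eigenvalues, so no $F_i$ occurs in $G_v$. Once $G_v$ itself lies in $\F$, adjoining the isolated $v_0$ (with at most one further switch at $v_0$, in the complete-split-graph case) lands in $\F$. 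The isolated vertex is thus not a device for induction but the extra point that makes five-vertex interlacing decisive; this also explains why the weaker hypothesis ``in $[-1,1]$'' rather than ``equal to $\pm 1$'' causes no trouble.
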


\begin{proof}
We may assume that $G$ has an isolated vertex $v$.
Let $G_v$ be the graph induced by the remaining vertices.
We will use Lemma~\ref{4} to prove that $G_v\in\F$.
Let $F$ be an induced subgraph of $G_v$ of order $4$.
Then $F+K_1$ is an induced subgraph of $G$, and by interlacing, its Seidel matrix has at least three eigenvalues in the interval $[-1,1]$.
The Seidel matrix of $F_i+K_1$ has spectrum $\{-\sqrt{5}{\,}^2,0,\sqrt{5}{\,}^2\}$ if $i=2$, and 
$\pm \{ -1^2, 3, \frac{1}{2}(-1\pm\sqrt{17}) \}$ if $i=0,1,3$ or $4$ (see~\cite{vLS}).
Therefore $F\neq F_0,\ldots,F_4$, hence $G_v\in\F$.
This implies that either $G\in\F$, or $G$ will become a member of $\F$ after switching with respect to $v$.
\end{proof}
Apparently, the adjacency matrix $A$ of a signed complete graph $G^\sigma$ in $\SG$ is equal to the Seidel matrix of a graph in $\F$.
So $A$ is switching equivalent with the Seidel matrix $\pm S_{m,\ell}$ (defined above).
Note that $S_{m,\ell}$ is switching equivalent with $-S_{\ell,m}$, therefore:

\begin{cor}\label{corS}
If $G^\sigma$ is a signed complete graph in $\SG$, then $G^\sigma$ is switching isomorphic with a signed graph with adjacency matrix $S_{m.\ell}$. 
\end{cor}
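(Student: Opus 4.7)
The plan is to chain Theorem~\ref{Seidel} with the two observations made immediately before the statement of the corollary, so the proof is really a matter of bookkeeping once the heavy lifting in Theorem~\ref{Seidel} is in hand. First, I would unpack the hypothesis: because $G^\sigma$ is a signed complete graph, its adjacency matrix $A$ is by definition the Seidel matrix of some unsigned graph $H$, and because $G^\sigma\in\SG$, all but at most two eigenvalues of $A$ equal $\pm 1$ and so certainly lie in $[-1,1]$. Theorem~\ref{Seidel} then applies and produces a graph $H'\in\F$ that is Seidel-switching-equivalent to $H$.

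Next I would translate this back into the signed setting. The Seidel matrix of any member of $\F$ is, by construction, either $S_{m,\ell}$ (if $H'=K_m+\ell K_1$) or $-S_{m,\ell}$ (if $H'$ is the complete split graph, since complementation negates the Seidel matrix). Since switching equivalence of unsigned graphs is exactly switching isomorphism of the associated signed complete graphs, this already shows that $G^\sigma$ is switching isomorphic to a signed graph with adjacency matrix $S_{m,\ell}$ or $-S_{m,\ell}$.

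Finally, to remove the sign ambiguity, I would appeal to the remark that $S_{m,\ell}$ is switching equivalent to $-S_{\ell,m}$; this is immediate from the block form, since negating the last $\ell$ rows and columns of $S_{m,\ell}$ and then swapping the two blocks by a permutation similarity produces precisely $-S_{\ell,m}$. So in the minus case we swap the roles of $m$ and $\ell$ and conclude that $G^\sigma$ is switching isomorphic to a signed graph with adjacency matrix $S_{\ell,m}$, which is of the required form.

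The proof is essentially mechanical given the work already done; the only place where one could slip is in the final sign-absorption step, so the main thing to verify carefully is the switching equivalence $S_{m,\ell}\sim -S_{\ell,m}$, but this is transparent from the block structure and needs no further computation.
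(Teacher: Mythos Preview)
Your proof is correct and follows exactly the route sketched in the paper: invoke Theorem~\ref{Seidel} to land in $\F$, read off that the Seidel matrix is $\pm S_{m,\ell}$, and absorb the sign via the switching equivalence $S_{m,\ell}\sim -S_{\ell,m}$. The paper presents this as a one-line remark preceding the corollary rather than as a formal proof, but the content is identical.
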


\section{Bipartite graphs in $\SG$}\label{bipartite}

Suppose $G^\sigma$ is a connected bipartite signed graph in $\SG $ with adjacency matrix $A$. Then we may assume
\[
A=\left[
\begin{array}{cc}
O & N\\N^\top & O
\end{array}
\right],\ {\rm and}
\ A^2=\left[
\begin{array}{cc}
NN^\top & O \\ O & N^\top\! N
\end{array}
\right].
\]
Note that for a bipartite signed graph, $G^\sigma$ and $G^{-\sigma}$ are switching isomorphic.
In particular, $A$ and $-A$ have the same spectrum.
If $A$ has an eigenvalue $0$, then the smallest eigenvalue equals $-1$, and Lemma~\ref{-1} implies $G=K_2$.
So $A$ is nonsingular, and therefore $N$ is a square matrix.
Moreover, if $G\neq K_2$ then $A^2$ has all but two eigenvalues equal to $1$, which implies that rank$(NN^\top\! -I)=1$.
Van~Dam and Spence~\cite{vDS} have proved that for a connected bipartite (unsigned) graph $G\in\SG$ ($G\neq K_2$)
the matrix $N$ is one of the following:
\begin{eqnarray}\label{N}
\ N=
\left[
\begin{array}{cc}
J-I_3 & J \\ O &J-I_3
\end{array}
\right],
\ {\rm or}
\ N=
\left[
\begin{array}{cc}
1 & \1^\top \\ \1 & I_4
\end{array}
\right],
\ {\rm or}
\ N=J-I_m\ (m\geq 3),
\end{eqnarray}
and the spectrum of $G$ is $\{-1^5, 1^5, \pm 4\}$, $\{-1^4, 1^4, \pm 3\}$, and $\{-1^{m-1}, 1^{m-1}, \pm(m-1)\}$, respectively.
When a signed graph $G^\sigma$ is switching isomorphic with one of the above graphs, then
$G^\sigma$ is a connected bipartite signed graphs in $\SG$.
But also the converse is true.

\begin{thm}
If $G^\sigma \in\SG$ is connected and bipartite then $G^\sigma$ is switching isomorphic with one of the
unsigned connected bipartite graphs in $\SG$.
\end{thm}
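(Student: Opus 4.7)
The plan is to show that the signature $\sigma$ is \emph{balanced} --- every cycle carries an even number of negative edges --- because by the standard balance theorem a balanced signature is switching equivalent to the all-positive one. Hence $G^\sigma$ will be switching isomorphic to the unsigned graph $G$, which in turn belongs to $\SG$ (switching preserves the spectrum) and is one of the three bipartite graphs listed in~(\ref{N}), finishing the proof.

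To set the stage, I would dispose of the trivial case $G=K_2$ and assume $G\neq K_2$. The discussion preceding the theorem then gives that the spectrum of $G^\sigma$ is $\{\lambda,\,1^{m-1},\,-1^{m-1},\,-\lambda\}$ with $\lambda>1$, so in particular $\lambda_2(A)=1$.

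The main step, and the heart of the argument, is to rule out unbalanced cycles via interlacing. Suppose some cycle of $G^\sigma$ has sign $-1$, and pick a shortest such cycle $C$, of even length $2k\ge 4$. A standard chord-splitting argument forces $C$ to be chordless, hence induced: any chord $e$ would write $C$ as two shorter cycles $C_1,C_2$ sharing $e$, and because the two occurrences of $\sigma(e)$ cancel one has $\mathrm{sign}(C)=\mathrm{sign}(C_1)\,\mathrm{sign}(C_2)$, so unbalancedness of $C$ would force one of $C_1,C_2$ to be unbalanced, contradicting the minimality of $C$. (For $2k=4$ a bipartite $C_4$ admits no chord at all; for $2k\ge 6$ any chord splits $C$ into two even cycles of length at least $4$.) The adjacency matrix of $C$ is therefore a principal submatrix of $A$, and as the unbalanced cycle $C^{-}_{2k}$ has spectrum $\{2\cos((2j+1)\pi/(2k)):\,0\le j\le 2k-1\}$, its two largest eigenvalues are both equal to $2\cos(\pi/(2k))\ge 2\cos(\pi/4)=\sqrt 2>1$. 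Cauchy interlacing now yields $\lambda_2(A)\ge\sqrt 2$, contradicting $\lambda_2(A)=1$.

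Once balance is established, the rest is automatic: $G^\sigma$ is switching isomorphic to $G$ (viewed as unsigned), which therefore lies in $\SG$ and by the Van~Dam--Spence classification recalled in~(\ref{N}) is one of the three graphs listed. The principal obstacle is the interlacing step; it rests on the shifted-cosine formula for the spectrum of the unbalanced cycle (a ``half-twist'' of the usual $C_n$-spectrum) and the observation that the second-largest eigenvalue of every $C^{-}_{2k}$ exceeds $1$.
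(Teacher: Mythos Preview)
Your argument is correct and takes a genuinely different route from the paper's proof.

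The paper works directly with the biadjacency matrix $N$: it uses $\mathrm{rank}(NN^\top-I)=1$ to derive constraints on the weights and pairwise inner products of the rows of $N$, then carries out a case analysis (minimum row weight $k_1\ge 3$ versus $k_1=2$) that mimics the Van~Dam--Spence classification while allowing $\pm 1$ entries, and checks that no genuinely signed matrices survive. Your approach bypasses all of this matrix casework. You observe that the spectral hypothesis forces $\lambda_2(A)=1$, and then a single interlacing argument against a shortest unbalanced cycle (which is induced by the standard chord-splitting trick, and whose second eigenvalue is $2\cos(\pi/(2k))\ge\sqrt{2}$) rules out unbalanced cycles altogether. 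Balance plus Harary's theorem then reduces to the unsigned Van~Dam--Spence result without reproving it.

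What your approach buys is brevity and conceptual clarity: the structural reason the signs contribute nothing new is precisely that any unbalanced cycle would force $\lambda_2>1$. What the paper's approach buys is self-containment: it does not rely on the balance theorem or on the eigenvalue formula for unbalanced cycles, and it reproduces the three outcomes in~(\ref{N}) from scratch rather than quoting them as a black box.
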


\begin{proof}
We follow the steps for the unsigned classification, given by Van~Dam and Spence~\cite{vDS}.
The case $G=K_2$ is trivial.
If $G\neq K_2$ then $N$ has at least two rows. 
Suppose $r_1$ and $r_2$ are two distinct rows of $N$ with weights $k_1$ and $k_2$ respectively.
Without loss of generality, we assume that $r_1$ has no negative entries, and that no row of $N$ has weight smaller than $k_1$.
Then $NN^\top\! -I$ has the following principal submatrix
\[
B=
\left[
\begin{array}{cc}
k_1-1 & x \\ x & k_2-1
\end{array}
\right],\ {\rm where}\ x=r_1 {r_2}^\top.
\]
We may assume that $x\geq 0$ (otherwise we replace $r_2$ by $-r_2$).
Since rank$(NN^\top -I)= 1$, $B$ is singular, and therefore
\begin{eqnarray}\label{sing}
(k_1-1)(k_2-1)=x^2.
\end{eqnarray}
One solution is that $x=0$ and $k_1=1$.
But then $G^\sigma$ is disconnected.
It is clear that $x\leq k_1$.
If $x=k_1$ then equation~\ref{sing} gives $(x-1)(k_2-1)=x^2$,
hence $k_2=x+2+1/(x-1)$ which implies $x=k_1=2$, and $k_2=5$.

First assume $k_1 \geq 3$.
Then $x\neq k_1$, so $x\leq k_1-1$, and equation~(\ref{sing}) implies that $x=k_1-1=k_2-1$.
Therefore all rows of $N$ have weight $k_1$, and we can permute and switch in $N$ such that $r_1$ and $r_2$ look as follows:
\[
\left[
\begin{array}{c}
r_1 \\ r_2
\end{array}
\right]
=
\left[
\begin{array}{cccccc}
1\ \ldots\ 1 & 1 & 1 & 0 & 0 & 0\ \ldots\ 0 \\
1\ \ldots\ 1 & 1 & 0 & 1 & 0 & 0\ \ldots\ 0
\end{array}
\right].
\]
Let $r_3$ be a row of $N$ different from $r_1$ and $r_2$.
($N$ has at least three rows because $k_1\geq 3$ and $N$ is square matrix).
Then $r_1 r_3^\top=k_1-1$ (we replace $r_3$ by $-r_3$ if $r_1 r_3^\top$ is negative)
and $r_2 r_3^\top=\pm (k_1-1)$.
This leads to the following possibilities for $r_3$ (here we use that $k_1\geq 3$):
\begin{eqnarray*}
r_3 &=&
\left[
\begin{array}{cccccc}
1\ \ldots\ 1 & 1 & 0 & 0 & \!\!\pm 1\!\! & 0\ \ldots\ 0
\end{array}
\right],\ {\rm or}
\\
r_3 &=&
\left[
\begin{array}{cccccc}
1\ \ldots\ 1 & 0 & 1 & 1 & \, 0 & 0\ \ldots\ 0
\end{array}
\right].
\end{eqnarray*}
The first case cannot be completed to a square matrix, and the second case leads to $N=J-I_m$ with $m=k_1+1$.

Next we consider the case $k_1=2$.
Now each row of $N$ has weight $2$ or $5$.
If all rows have weight 2, then any two distinct rows have inner product $\pm 1$, and we find just two nonequivalent possibilities:
\[
N=\left[\begin{array}{ccc}
1 &  1 &  0 \\
1 &  0 &  1 \\
0 &  1 &  1
\end{array}\right],\ {\rm or}
\ N={\left[\begin{array}{ccr}
1 &  1 &  0 \\
1 &  0 &  1 \\
0 &  1 & \!\!-1
\end{array}\right]}.
\]
However the second case does not occur, since then rank$(NN^\top\!-I) = 3$.

So some rows have weight 2, and some have weight 5.
We have seen that a row of weight $2$ and a row of wight 5 have inner product $\pm 2$,
that two rows of weight $5$ have inner product $\pm 4$,
and that two rows of weight $2$ have inner product $\pm 1$.
If there is just one row of weight $5$ and the other ones have weight $2$,
then there is up to equivalence just one possibility: the second matrix in~(\ref{N}).
With two or four rows of weight $5$ there is no solution, but with three rows of weight $5$
we find the first matrix in~(\ref{N}).
\end{proof}

\section{Other signed graphs in $\SG$}\label{more}

Here we present some signed graphs in $\SG$ which are not described in one of the previous sections.
The reverse identity matrix of order $m$ is denoted by $R_m$ (that is, $R_m(i,j)=1$ if $i+j=m+1$,
and $R_m(i,j)=0$ otherwise).
Note that all eigenvalues of $R_m$ are equal to $\pm 1$.

\begin{thm}
The following matrices represent signed graphs in $\SG$.
\\[5pt]
$
A_1=\left[
\begin{array}{cc}
J\!-\!I_m & J \\ \!J & \!-R_{2\ell}
\end{array}
\right]
$
($m,\ell\geq 1$) with spectrum $\{-1^{\ell+m-2},1^\ell,\frac{1}{2}(m-2\pm\sqrt{m(m+8\ell)}\,)\}$,
\\[5pt]
$
A_2=\left[
\begin{array}{cc}
R_{2m} & J \\ J & -R_{2\ell}
\end{array}
\right]
$
($m,\ell\geq 1$) with spectrum $\{-1^{m+\ell-1},1^{m+\ell-1},\pm\sqrt{1+4m\ell}\}$,
\\[5pt]
$
A_3=\left[
\begin{array}{rrcr}
\!J\!-\!I_m\! & \1     & \1  & O\ \\
\ \1^\top\    & 0      &  1  & -\1^\top \\
\ \1^\top\    & 1      &  0  &  \1^\top \\
O\ \          & \!-\1  & \1  & \!I_\ell-\!J\!
\end{array}
\right]
$
($m,\ell\geq 1$) with spectrum $\{-1^m,1^\ell,-\ell-1,m+1\}$,
\\[5pt]
$
A_4=\left[
\begin{array}{rccr}
\!J\!-\!I_m\! & J   & \ O   &  J\ \\
J\ \          & R_2 & \ O   &  O\ \\
O\ \          & O   & -R_2  & -J\ \\
J\ \          & O   & -J    & \!I_\ell-\!J\!
\end{array}
\right]
$\hspace{-3pt}
\begin{tabular}{ll}
($m,\ell\geq 1$) with spectrum $\{-1^{m+1},1^{\ell+1}$,\\[3pt]
$\frac{1}{2}(m-\ell\pm\sqrt{m^2+\ell^2+6m\ell+4m+4\ell+4})\}$.
\end{tabular}

\end{thm}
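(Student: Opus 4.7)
The plan is uniform across the four matrices. For each $A_i$ I would exhibit an explicit decomposition of the ambient space into a large ``local'' subspace on which $A_i$ acts as $\pm I$, together with a low-dimensional residual subspace spanned by the characteristic vectors of the diagonal blocks, on which the problem reduces to the spectrum of a small matrix.

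First I would tabulate the local eigenvectors produced by each diagonal block. For a block of type $J-I_m$, any vector supported on that block and orthogonal to $\1_m$ is killed by every off-diagonal $\pm J$ block and satisfies $(J-I_m)x=-x$, so it contributes $m-1$ eigenvectors for $-1$. Symmetrically $I_\ell-J$ contributes $\ell-1$ eigenvectors for $+1$. For a block $\pm R_{2k}$ I split the local space into its $k$-dimensional palindromic and $k$-dimensional antipalindromic parts: antipalindromic vectors automatically sum to zero and give $k$ local eigenvectors (eigenvalue $\mp 1$), while sum-zero palindromic vectors give $k-1$ local eigenvectors (eigenvalue $\pm 1$). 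In total this produces $m+2\ell-2$ local $\pm 1$ eigenvectors for $A_1$, $2m+2\ell-2$ for $A_2$, $m+\ell-2$ for $A_3$, and $m+\ell$ for $A_4$.

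In each case the orthogonal complement of the local eigenspace is the span of the block characteristic vectors, so the restriction of $A_i$ to this residual subspace is a $2\times 2$ matrix for $A_1,A_2$ and a $4\times 4$ matrix for $A_3,A_4$. I would compute the entries directly from $(J-I_m)\1_m=(m-1)\1_m$, $(I_\ell-J)\1_\ell=-(\ell-1)\1_\ell$, $R_{2k}\1_{2k}=\1_{2k}$ and $J\1_r=r\1$ (with the right block sizes). For $A_1$ the resulting $2\times 2$ residual has characteristic polynomial $\lambda^2-(m-2)\lambda-(m-1+2m\ell)$, whose roots are $\frac12(m-2\pm\sqrt{m(m+8\ell)})$; for $A_2$ the residual is $\left(\begin{smallmatrix}1&2\ell\\2m&-1\end{smallmatrix}\right)$, with characteristic polynomial $\lambda^2-1-4m\ell$ and roots $\pm\sqrt{1+4m\ell}$.

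For $A_3$ and $A_4$ the $4\times 4$ residual itself has two eigenvalues equal to $\pm 1$; these ``hidden'' $\pm 1$ eigenvalues account for the discrepancy between the local count and the multiplicities $-1^m,1^\ell$ and $-1^{m+1},1^{\ell+1}$ claimed in the theorem. I would exhibit the outlier eigenvectors directly: for $A_3$, the vectors $(\1_m,1,1,\0)$ and $(\0,1,-1,\1_\ell)$ are readily checked to have eigenvalues $m+1$ and $-\ell-1$, after which the trace of the $4\times 4$ residual forces the remaining two eigenvalues to sum to zero and a short determinant computation pins them at $\pm 1$. For $A_4$ the same method applies: one exhibits explicit $\pm 1$ eigenvectors of the shape $(\1_m,-\tfrac{m}{2}\1_2,\tfrac{m}{2}\1_2,\0)$ and $(\0,-\tfrac{\ell}{2}\1_2,-\tfrac{\ell}{2}\1_2,\1_\ell)$, peels them off, and checks that the remaining $2\times 2$ block has trace $m-\ell$ and determinant $-(2m\ell+m+\ell+1)$, giving characteristic polynomial $\lambda^2-(m-\ell)\lambda-(2m\ell+m+\ell+1)$ whose roots are the claimed $\frac12(m-\ell\pm\sqrt{m^2+\ell^2+6m\ell+4m+4\ell+4})$. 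The main obstacle throughout is bookkeeping: matching the sizes of the off-diagonal $\pm J$ blocks consistently, and locating the hidden $\pm 1$ eigenvectors in the $4$-dimensional residual spaces of $A_3$ and $A_4$ so that the final multiplicities agree with the claimed spectra.
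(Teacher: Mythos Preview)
Your proposal is correct and is essentially the same decomposition the paper uses: the paper phrases it as the equitable-partition/quotient-matrix method (citing Lemma~1 of \cite{CHVW} and \S2.3 of \cite{BH}), where your ``residual'' matrices are exactly their quotient matrices and your ``local'' eigenvectors are their eigenvectors orthogonal to the span $\mathcal{V}$ of block characteristic vectors. The only cosmetic difference is that the paper simply states the spectra of the four quotient matrices, whereas you go further and exhibit explicit eigenvectors (e.g.\ $(\1_m,1,1,\0)$ and $(\0,1,-1,\1_\ell)$ for $A_3$) to locate the hidden $\pm 1$'s inside the $4\times 4$ residuals.
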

\begin{proof}
We use the method described in Lemma~1 of \cite{CHVW} (see also Section~2.3 of \cite{BH}).
For $i=1,\ldots,4$ the given block structure of $A_i$ corresponds to an equitable partition,
and therefore $A_i$ has two kinds of eigenvalues,
the ones with an eigenvector in the space $\cal V$ spanned by the characteristic vectors of the partition,
and the ones with an eigenvector orthogonal to $\cal V$.
The eigenvalues of the first kind are the eigenvalues of the quotient matrices
\[
\left[
\begin{array}{cc}
m-1 & 2\ell \\
m & -1
\end{array}
\right],\
\left[
\begin{array}{cc}
1 & 2\ell \\ 2m & -1
\end{array}
\right],\
\left[
\begin{array}{crrr}
m\!-\!1 & \ 1 & 1 & 0\ \\
m\      & 0   & 1 & -\ell\ \\
m\      & 1   & 0 & \ell\ \\
0\      & -1  & 1 & \ 1\!-\!\ell
\end{array}
\right]
\mbox{and}\
\left[
\begin{array}{ccrr}
m\!-\!1 & 2 &  0 & \ell\ \\
m      & 1 &  0 & 0\ \\
0      & 0 & -1 & -\ell\ \\
m      & 0 & -2 & \ 1\!-\!\ell
\end{array}
\right].
\]
They have spectra $\{\frac{1}{2}(m-2\pm\sqrt{m(m+8\ell)}\,)\}$, $\{\pm\sqrt{1+4m\ell}\}$,
$\{\pm 1,-\ell-1,m+1\}$, and $\{\pm 1,\frac{1}{2}(m-\ell\pm\sqrt{m^2+\ell^2+6m\ell+4m+4\ell+4})\}$, respectively.
The eigenvalues of the second kind remain unchanged if we add $\pm J$ to some of the blocks,
so they are also eigenvalues of
\[
\left[
\begin{array}{cc}
-I_m & O \\
O & -R_{2\ell}
\end{array}
\right],\
\left[
\begin{array}{cc}
R_{2m} & O \\ O & -R_{2\ell}
\end{array}
\right],\
\left[
\begin{array}{rrrc}
-I_m & \0 & \0 & O\ \\
\0^\top & 0 & 1 & \0^\top \\
\0^\top & 1 & 0 & \0^\top \\
O\ & \0 & \0 & I_\ell
\end{array}
\right] \mbox{and}\
\left[
\begin{array}{cccc}
-I_m & O &  O & O\ \\
O    & R_2 &  O & O \\
O    & O & I_2 & O \\
O    & O &  O & I_\ell
\end{array}
\right],
\]
which are clearly all equal to $\pm 1$.
\end{proof}

Obviously signed graph obtained from one of the above examples by switching or taking the negative are also in $\SG$.
Only the signed graph represented by $A_1$ with $m=1$ is switching isomorphic with its underlying graph (which is the Friendship graph).
All the other ones are not switching isomorphic with an unsigned graph or its negative.
For $A_1$ and $A_2$ the underlying graphs are also in $\SG$ (see \cite{CHVW}, Theorem~1),
but for $A_3$ and $A_4$ this is not the case.
Note that $A_1$ with $m=2$ is equal to $A_2$ with $m=1$.

Zoran Stani\'c~\cite{S} has generated all nonequivalent connected signed graphs on at most eight vertices, and computed their spectra.
By checking Stani\'c's list we found that the numbers of nonequivalent connected graphs in $\SG$ of orders $1$ to $8$ are equal to
$1,1,2,4,8,14,20,29$, respectively.
It turns out that only four signed graphs on at most eight vertices haven't been described above.
These four sporadic signed graphs are the two graphs in Figure~\ref{sporadic} and their negatives.

\setlength{\unitlength}{3pt}
\begin{figure}[h]
\begin{picture}(36,15)(-20,-10)
\put(0,0){\line(0,1){10}}
\put(0,0){\line(1,0){20}}
\put(10,0){\line(0,1){10}}
\put(20,0){\line(0,1){10}}
\put(0,10){\line(1,0){20}}

\put(20,0){\line(2,1){10}}
\put(20,10){\line(2,-1){10}}

\put(10,0){\line(1,1){10}}
\put(10,0){\line(-1,1){10}}
\put(10,10){\line(-1,-1){10}}
\put(10,10){\line(1,-1){10}}

\put(-3.5,4){$-$}

\put(0,0){\circle*{2}}
\put(0,10){\circle*{2}}
\put(10,0){\circle*{2}}
\put(10,10){\circle*{2}}
\put(20,0){\circle*{2}}
\put(20,10){\circle*{2}}
\put(30,5){\circle*{2}}

\put(-8,-10){spectrum $\{-1^3,1^2,\frac{1}{2}(1\pm\sqrt{41})\}$}
\end{picture}
\begin{picture}(32,25)(-40,-10)
\put(10,0){\line(0,1){10}}
\put(10,0){\line(2,1){10}}
\put(10,0){\line(-2,1){10}}
\put(10,10){\line(-2,-1){10}}
\put(10,10){\line(2,-1){10}}
\put(30,10){\line(-1,1){5}}
\put(30,0){\line(-1,-1){5}}

\put(20,5){\line(1,2){5}}
\put(20,5){\line(2,1){10}}
\put(20,5){\line(1,-2){5}}
\put(20,5){\line(2,-1){10}}

\put(6.5,4){$-$}

\put(10,0){\circle*{2}}
\put(10,10){\circle*{2}}
\put(0,5){\circle*{2}}
\put(20,5){\circle*{2}}
\put(30,0){\circle*{2}}
\put(30,10){\circle*{2}}
\put(25,-5){\circle*{2}}
\put(25,15){\circle*{2}}

\put(-2,-10){spectrum $\{-1^3,1^3,\pm2\sqrt{2}\}$}
\end{picture}
\caption{Sporadic signed graphs in $\SG$}\label{sporadic}
\end{figure}

\section{Concluding remarks}
A complete determination of the signed graphs in $\SG$ will make it possible to decide which signed graphs in $\SG$ are determined
by their spectrum (up to switching), and which ones are not.
Nevertheless, the results if this paper already lead to some interesting conclusions.

\begin{thm}\label{compDS}
A complete signed graph in $\SG$ is determined by its spectrum (up to switching).
\end{thm}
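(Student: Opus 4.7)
The plan is to reduce the theorem, via Corollary~\ref{corS}, to the numerical claim that the pair $(m,\ell)$ is uniquely recoverable from the spectrum of $S_{m,\ell}$. By that corollary every signed complete graph in $\SG$ is switching isomorphic to the signed graph with adjacency matrix $S_{m,\ell}$ for some integers $m,\ell\geq 1$. Hence if two signed complete graphs in $\SG$ share a spectrum, each is switching isomorphic to some $S_{m_i,\ell_i}$, and establishing $(m_1,\ell_1)=(m_2,\ell_2)$ forces them to be switching isomorphic to the same matrix and therefore to one another.

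For the recovery step I would first read off $n=m+\ell$ as the number of eigenvalues counted with multiplicity. In the generic case the spectrum has exactly two eigenvalues outside $\{-1,+1\}$, namely the ``exceptional'' pair $\mu_+,\mu_-$ from equation~(\ref{comspec}); these satisfy $\mu_+ + \mu_- = m-\ell$, so the difference $m-\ell$ is read directly off the spectrum, and combining it with $m+\ell=n$ pins down $(m,\ell)$ at once.

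The main obstacle is the degenerate situation in which one of $\mu_\pm$ happens to equal $\pm 1$, absorbing into the generic part of the spectrum so that the exceptional eigenvalues cannot be singled out unambiguously. A direct calculation using the discriminant $D=m^2+\ell^2+6m\ell-4m-4\ell+4$ shows that this can occur only when $m=1$ or $\ell=1$: in those cases the spectrum collapses to $\{-1^{\,n-1},n-1\}$ or $\{1^{\,n-1},-(n-1)\}$ respectively. Each such spectrum has a single non-$\pm 1$ eigenvalue whose sign identifies which of $m$ or $\ell$ equals $1$, and the remaining parameter is then forced by $m+\ell=n$. The boundary case $m=\ell=1$ gives the spectrum $\{-1,+1\}$ with $n=2$ and is uniquely identified as well. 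Thus $(m,\ell)$ is determined by the spectrum in every case.
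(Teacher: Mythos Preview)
Your parameter-recovery argument is fine and in fact more careful about the degenerate boundary cases than the paper itself, which simply asserts that ``$\ell$ and $m$ are determined'' by the spectrum. But there is a genuine gap in the overall logic. The statement ``determined by its spectrum (up to switching)'' means that \emph{every} signed graph cospectral with the given one must be switching isomorphic to it---not merely every \emph{complete} signed graph in $\SG$. Your opening sentence (``if two signed complete graphs in $\SG$ share a spectrum\dots'') quietly assumes the competitor is already complete, and Corollary~\ref{corS} only applies once that is known. Nothing in your argument excludes the possibility that some non-complete signed graph happens to have the spectrum~(\ref{comspec}).

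The paper closes this gap with a one-line counting argument you should add at the start: for any signed graph on $n$ vertices with adjacency matrix $A$ one has
\[
\sum_{i=1}^n \lambda_i^2 \;=\; \mathrm{trace}(A^2)\;=\;2|E(G)|\;\le\; n(n-1),
\]
with equality if and only if the underlying graph is complete. A direct check shows that the spectrum~(\ref{comspec}) attains this bound (indeed $\mu_+^2+\mu_-^2=(m-\ell)^2+D=2(m+\ell)(m+\ell-1)-(m-1)-(\ell-1)$, so the total is $n(n-1)$). Hence any signed graph with that spectrum is complete, automatically lies in $\SG$, and only then does Corollary~\ref{corS} kick in to put you in the situation your argument actually handles.
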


\begin{proof}
If $G^\sigma$ is a signed graph of order $n$ with adjacency matrix $A$ and eigenvalues $\lambda_1,\ldots,\lambda_n$,
then we easily have that $\sum_i \lambda_i^2=\mbox{ trace}(A^2)\leq n(n-1)$ with equality if and only if $G^\sigma$ is complete.
If $G^\sigma$ has the spectrum given in formula~(\ref{comspec}) of Section~\ref{complete}, then $\ell$ and $m$ are determined
and $\sum_i\lambda_i^2 = n(n-1)$, so $G^\sigma$ is a complete signed graph in $\SG$ which, by Corollary~\ref{complete},
has an adjacency matrix switching equivalent with $S_{m,\ell}$.
\end{proof}

An unsigned graph is bipartite if and only if the spectrum is symmetric (that is, the spectrum is invariant under multiplication by $-1$).
This however, doesn't hold for signed graphs.
Here we found many counter examples.
In fact, for every bipartite signed graph $G^\sigma\in\SG$ there is a signed graph presented in Section~\ref{more}
with adjacency matrix $A_3$ which, when extended with some isolated edges, has the same spectrum as $G^\sigma$.
So, in contrast to Theorem~\ref{compDS}, none of the bipartite signed graphs in $\SG$ is determined by the spectrum.

Many graphs in $\SG$ have a symmetric spectrum.
This includes the  bipartite ones, some of the signed complete graphs (the case $m=\ell$), and several signed graphs
from Section~\ref{more}.
Signed graphs which are switching isomorphic with their negatives are called {\em sign-symmetric}.
We already observed that bipartite signed graphs are sign-symmetric, and clearly sign-symmetric graphs have a symmetric spectrum.
Signed graphs with symmetric spectrum which are not sign-symmetric are of special interest, see~\cite{BCKW} and \cite{GHMP}.
Here we find more examples.
The second signed graph of Figure~\ref{sporadic} is not sign-symmetric, and neither are
the ones constructed in Section~\ref{more} with adjacency matrix $A_2$ and $m\neq\ell$.
When $1+4m\ell$ is a square, then for each of the latter examples there exists a bipartite signed graph with the same spectrum
(we may have to extend one of the signed graphs with some isolated edges).
We can even find such a pair of connected signed graphs.
Indeed, the bipartite signed graph of order 16 from (\ref{N}) in Section~\ref{bipartite} (take $m=8$ in the third matrix)
has the same spectrum as the one represented by $A_2$ with $m=2$ and $\ell=6$, which is not sign-symmetric.
\\

\noindent
{\bf Acknowledgement}
We thank Hakan K\"{u}\c{c}\"{u}k for checking which graphs from the list of Stani\'c belong to $\SG$.

\end{document}